\documentclass[11pt,reqno]{amsart}
\usepackage[dvips]{graphicx}
\usepackage{epsfig}
\usepackage{amssymb}
\usepackage[usenames, dvipsnames]{color}
\usepackage{hyperref}
\usepackage{verbatim}
\usepackage[normalem]{ulem}

\theoremstyle{plain}
\newtheorem{theorem}{Theorem}[section]
\newtheorem{lemma}[theorem]{Lemma}
\newtheorem{corollary}[theorem]{Corollary}

\theoremstyle{definition}
\newtheorem{definition}[theorem]{Definition}

\theoremstyle{remark}
\newtheorem{remark}[theorem]{Remark}

\newcommand{\norm}[1]{\left\| #1 \right\|}

\def\e{\epsilon}

\def\pa{\partial}

\def\mb{\mathbb}

\def\O{\Omega}

\def\be{\begin{equation}}
\def\ee{\end{equation}}
\def\bes{\begin{equation*}}
\def\ees{\end{equation*}}
\def\bali{\begin{aligned}}
\def\eali{\end{aligned}}
\def\al{\begin{aligned}}
\def\eal{\end{aligned}}

\def\lab{\label}

\def\2O{\underline{\O}}

\def\pa{\partial}

\numberwithin{equation}{section}

\makeatletter
\def\dashint{\operatorname%
{\,\,\text{\bf--}\kern-.98em\DOTSI\intop\ilimits@\!\!}}
\makeatother

\begin{document}

\title[Relative volume comparison theorem under Kato type conditions]{Relative volume comparison under Kato type conditions for the Ricci curvature}

 \author[Wu]{Yuting Wu}

 \address{College of Mathematics and Statistics, Northwest Normal University, Lanzhou 730070, Gansu, P. R. of China, E-mail address: 090094@nwnu.edu.cn.}

\date{}

\begin{abstract}
Let $(M, g)$ be an $n$ $(\ge 3)$ dimensional, non-collapsed compact Riemannian manifold and $\operatorname{Ric}^-$ be the negative part of the Ricci curvature and $\beta \in (\frac{2n}{n+2}, 2)$. We prove a relative volume comparison theorem when $|\operatorname{Ric}^-|^\beta$ is in the Kato class (cf. Definition \ref{Ka}), which results from a new integral Laplace comparison theorem in the spirit of \cite{PW} for a suitable conformal metric. This partly addresses an expectation in \cite{TZZZZ}, where the same result was proven when $|\operatorname{Ric}^-|^2$ is in the class. 
 
\end{abstract}

\maketitle

\vspace{0.5cm}

\section{Introduction}

Several decades ago, 
the Bishop-Gromov volume comparison theorem was established under the condition that the Ricci curvature is bounded below by a constant, and it has played a fundamental role in differential geometry and the theory of manifold limits. For broader applications, it is desirable to relax the pointwise lower bound assumption on the Ricci curvature. In this direction,
in 1997, Petersen and Wei \cite{PW} developed an integral Laplace comparison theory, in which the negative part of the Ricci curvature  is assumed to be in $L^p$ ($p > n/2$). 
However, there are situations where this broader $L^p$ bound is not available.  One example is the study of K\"ahler geometry and Ricci flows. 
In 2015, Tian and Zhang \cite{TZ},
while investigating compactness problems for Fano manifolds and the K\"ahler Ricci flow, 
established a uniform $K_2$ Kato type estimate for the Hessian of the Ricci potential function (including the Ricci tensor).
More precisely, they obtained a $K_\beta$ Kato type estimate of the following form with $\beta=2$ for $F=|\operatorname{Hess}L|$, where $L$ is the Ricci potential:
\begin{definition} \lab{Ka}
Let $(M,g)$ be a compact Riemannian manifold with dimension $n\ge3$.
Assume that $F$ is an integrable function on $M$,  $d(x,y)$ is the distance between $x$ and $y$,
and $dg$ is the Riemannian volume element.
We say $F$ satisfies a $K_\beta$ Kato type condition if
there exists a constant
$C_0=C_0(\beta)>0$
such that
\begin{equation}
 \lab{kato}
K\left(|F(y)|^\beta\right)
:=
\sup_{x\in M}
\int_M
\frac{|F(y)|^\beta}
{d(x,y)^{n-2}}
\,dg(y)
\le C_0<\infty,
\end{equation}
where $0<\beta\le2$.
\end{definition}
This type of condition is, in general, weaker than the $L^p$ integrability assumption with $p>n/2$, and it is not contained in the $L^p$ framework.

In 2018, combining the $K_1$ Kato type condition with the heat kernel upper bound,
Zhang and Zhu deduced a Li-Yau gradient estimate in \cite[Remark 1.3]{ZqZm}.
Subsequently, Rose \cite{Rose} introduced a Kato type condition on the Ricci curvature defined in terms of 
the $L^1_t L^\infty_x$ norm involving the heat semigroup. 
Under a suitable smallness assumption on this quantity, he established a Li-Yau-type gradient estimate, 
which in turn yielded Harnack inequalities and heat kernel upper bounds.

In 2019, Carron \cite{Carron} considered a related Kato type class using convolutions with the heat kernel.
His condition is essentially the same as the above with $\beta=1$ if the manifold is non-collapsed and the heat kernel has a Gaussian lower bound, but is more natural otherwise.
This type of Kato class was first introduced in \cite{Sturm} in the Euclidean case and extended to the manifold case in \cite{Zhang1}.
Under a suitable smallness condition of his Kato type condition for the Ricci curvature, Carron established Li-Yau-type gradient estimates and heat kernel estimates and derived several geometric and analytic consequences, including Sobolev inequalities, volume doubling, and Poincar\'e inequalities.
Recently, Carron, Mondello, and Tewodrose \cite{CMT1} established a partial monotonicity formula for the volume ratio involving a Kato-dependent error term by using a time-change technique (see Remark \ref{rem} below).

In 2025, Tian et al. \cite{TZZZZ}, 
by the new method of constructing suitable conformal metrics, together with the $K_2$ Kato type Ricci curvature bound along the K\"ahler Ricci flow, established the necessary integral Laplace comparison result in the spirit of \cite{PW}, which was a main obstacle in the convergence theory. Consequently, they 
extended  parts of Cheeger-Colding theory to the K\"ahler Ricci flow setting. 
As an application, they obtained a direct and quantitative proof of the Hamilton-Tian conjecture, which also has direct implications to the Yau-Tian-Donaldson conjecture (cf. \cite[Section 6]{TZz}).

Tian et al. (cf. Subsection 1.2 in \cite{TZZZZ}, after Theorem 1.5) expected that the exponent $2$ in the $K_2$ Kato condition could be reduced, but did not specify an explicit lower exponent.
In this paper, we show that the exponent $2$ can be explicitly reduced to $\beta \in (\frac{2n}{n+2}, 2)$, while preserving the corresponding integral Laplace comparison, relative volume comparison theorem.
This provides a weaker curvature assumption than the original $K_2$ Kato condition.  
 

Compared with \cite{TZZZZ}, the key new ingredient is to exploit the Kato condition to derive a Morrey-type estimate, and then use an Adams-type estimate for the associated Riesz potential in Morrey spaces to obtain the required regularity estimates for the conformal function. 
This provides a crucial input for the proof of the theorem.

Before stating the main results, we fix some notations. 
Throughout the paper we use $(M, g)$ to denote a compact Riemannian manifold with dimension $n$ ($n\ge3$)
with a metric $g$. The $\operatorname{Ric}$ denotes the Ricci curvature, and $\operatorname{Ric}^-$ denotes its negative part. 
Let $\operatorname{diam} M$ be the diameter of $M$.
Let $B(x, r)$ be the geodesic ball centered at $x$ with radius $r$, and $\operatorname{Vol}\bigl(B(x, r)\bigr) $ denote its volume.
We use $\Delta$, $\nabla$, $\operatorname{Hess}$, and $\langle \cdot, \cdot \rangle$ to denote the Laplace operator, gradient, Hessian, and inner product with respect to $g$, respectively. 
For a conformal metric $\tilde g$ to be defined later,
the corresponding operators with respect to $\tilde{g}$ are denoted by $\Delta_{\tilde{g}}$, $\nabla_{\tilde{g}}$, $\operatorname{Hess}_{\tilde{g}}$, and $\langle \cdot, \cdot \rangle_{\tilde{g}}$. 
The volume elements for $g$ and $\tilde{g}$ are written as $dg$ and $d\tilde{g}$.
Given a tensor field $X$ on $M$, the notations $|X|$ and $|X|_{\tilde{g}}$ are used for their norms under $g$ and $\tilde g$, respectively. For example, if $X$ is a vector field, then $|X|^2=\left\langle X, X\right\rangle $ and
$|X|^2_{\tilde{g}}=\left\langle X, X\right\rangle _{\tilde{g}}$.

The main result is as follows:

\begin{theorem}\label{th:1.1}
Let $(M, g)$ be a compact Riemannian manifold with dimension $n$ ($n\ge3$),
and set
$
\beta \in (\frac{2n}{n+2}, 2).
$
Assume that the following conditions hold:
\begin{enumerate}
\item[(i).] the $K_\beta$ Kato type bound for the negative part of the Ricci curvature \eqref{kato} holds,
i.e. $K(|\operatorname{Ric}^-|^\beta)\leq C_0$;

\item[(ii).] $\operatorname{diam}M<\infty$;

 \item[(iii).] the Sobolev inequality holds with a constant $C_S$:
\begin{equation*}
	\left(
	\int_M \psi^{\frac{2n}{n-2}}\,dg
	\right)^{\frac{n-2}{n}}
	\leq
	C_S\left( \int_M |\nabla\psi|^2\,dg
	+
	\int_M \psi^2\,dg \right) 
\end{equation*}
for all $\psi\in C^\infty(M).$

\end{enumerate}
Then there exists a smooth function $h$ and positive constants $b_1, b_2$ and $b_3$ depending
only on the parameters $C_0,$ $C_S,$ $\operatorname{diam}M,$ $\beta$ and $n$, 
and $C^*_p$, $H^*_q$ which also depend on $p$, $q$, respectively,
such that the following results hold.

\begin{enumerate}	
\item[(a).] $b_1 \le h \le b_2$;
	
\item[(b).] $\Vert h \Vert_{C^{\frac{2(\beta-1)}{3\beta-2}}(M)} \le b_3$,
\quad 	$\|\nabla h\|_{L^p(M)}\le C^*_p\quad \forall p\in(n,\frac{2\beta}{2-\beta})$;

\item[(c).] Let $\tilde{g}  = h g$ and $q\in(n,\,\frac{4n\beta}{2n-(n-2)\beta})$. For any point $x_0 \in M$, let $\tilde r =d_{\tilde g}(x_0, x)$ be the geodesic distance from $x_0$ to $x$ under $\tilde g$. Let
\[
\psi = \Delta_{\tilde g} \tilde r - \frac{n-1}{\tilde r}
\]defined outside the cut-locus of $x_0$.
Then the positive part of $\psi$ denoted by $\psi_+$, satisfies the Petersen-Wei bound:
\[
\Vert \psi_+ \Vert_{L^q(M, \tilde g)} = \left(\int_M \psi^q_+ d\tilde g \right)^{\frac{1}{q}} \le H^*_q.
\]
\end{enumerate}
\end{theorem}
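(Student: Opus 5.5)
Following the strategy of \cite{TZZZZ}, I would construct $h$ by solving a linear elliptic equation whose source is the negative Ricci part, chosen so that the conformal change absorbs the bad curvature. Concretely, let $V=|\operatorname{Ric}^-|$ and let $u>0$ solve
\[
-\Delta u + \lambda u = c_n V u \quad\text{or, more simply,}\quad \Delta w = c_n (V - \bar V),\ \ h=e^{2w/(n-2)}\ (\text{after smoothing } V),
\]
with $\bar V$ the average and $c_n$ a dimensional constant. The constant is chosen so that, in the conformal Ricci formula
\[
\operatorname{Ric}_{\tilde g}=\operatorname{Ric}-\tfrac{n-2}{2}\bigl(\operatorname{Hess}\varphi-\tfrac12 d\varphi\otimes d\varphi\bigr)-\tfrac12\bigl(\Delta\varphi+\tfrac{n-2}{2}|\nabla\varphi|^2\bigr)g,
\]
where $h=e^{\varphi}$, the Laplacian term cancels $\operatorname{Ric}^-$ in the radial direction. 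What remains is a Hessian term and a gradient-squared term that must be controlled in $L^q$ along geodesics. For (a) I would apply Green's function estimates: on a non-collapsed manifold with the Sobolev inequality (iii) and bounded diameter, $|G(x,y)|\le C d(x,y)^{2-n}$. Then $|w(x)|\le C\int V d^{2-n}$, and by Hölder together with the $K_\beta$ bound this is controlled by $K(V^\beta)^{1/\beta}$ times a volume factor. This gives $b_1\le h\le b_2$.

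For (b) the key new step is the Morrey estimate. From (i) one gets $\int_{B(x,r)}V^\beta\le C_0 r^{n-2}$, so $V\in L^{\beta,\,(n-2)/n\cdot n}$ in Morrey-space terms, i.e. $V^\beta$ has Morrey exponent $n-2$. Then $\nabla w$ is bounded by the Riesz potential $I_1 V$. By Adams' theorem in Morrey spaces, $I_1: L^{\beta,\lambda}\to L^{p,\lambda}$ with $\frac1p=\frac1\beta-\frac1{n-\lambda}$. With $n-\lambda=2$ this gives $p=\frac{2\beta}{2-\beta}$ as the endpoint, yielding $\|\nabla h\|_{L^p}\le C_p^*$ for every $p<\frac{2\beta}{2-\beta}$. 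The condition $\beta>\frac{2n}{n+2}$ is exactly what makes this exceed $n$. Hölder continuity then follows by a Morrey/Campanato embedding, interpolating the Morrey bound of $\nabla h$ against the $L^p$ bound; the exponent $\frac{2(\beta-1)}{3\beta-2}$ should come from optimizing that interpolation.

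For (c) I would run the Petersen--Wei argument in $\tilde g$. Along a $\tilde g$-geodesic, the Bochner formula gives
\[
\psi'+\frac{\psi^2}{n-1}+\frac{2\psi}{\tilde r}\le \rho:=\bigl(\operatorname{Ric}_{\tilde g}(\pa_{\tilde r},\pa_{\tilde r})\bigr)^-.
\]
Multiplying by $\psi_+^{q-2}$ and integrating over $\tilde g$-balls, with the volume-form derivative equal to $\Delta_{\tilde g}\tilde r$, yields $\|\psi_+\|_{L^q}\le C\|\rho\|_{L^{q/2}}$ for $q>n$. By construction, $\rho$ is bounded by the negative part of the Hessian term and $|\nabla\varphi|^2$ after the cancellation. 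The Hessian $\operatorname{Hess}\varphi(\pa_r,\pa_r)$ is not in $L^{q/2}$ directly, so I would handle it as in \cite{TZZZZ}. Along the geodesic, $\operatorname{Hess}\varphi(\dot\gamma,\dot\gamma)=(\varphi\circ\gamma)''$, so it can be moved into the ODE by replacing $\psi$ with $\psi$ plus a multiple of $\langle\nabla\varphi,\pa_{\tilde r}\rangle$. This leaves only $|\nabla\varphi|^2$ and $\psi|\nabla\varphi|$ error terms. Using (b), $|\nabla\varphi|^2\in L^{q/2}$ requires $q<\frac{4\beta}{2-\beta}$. The stated range $q<\frac{4n\beta}{2n-(n-2)\beta}$ suggests that the Sobolev inequality applied to $V^{\beta/2}$-type quantities is also involved, which I would reconcile at that point.

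The main obstacle is this last step. One must absorb the Hessian term while integrating over a star-shaped region, with boundary terms at the cut locus of the right sign, and keep all constants independent of the smoothing of $V$.
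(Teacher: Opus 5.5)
Your overall plan matches the paper's: a conformal factor that absorbs $\operatorname{Ric}^-$, the codimension-two Morrey bound $\int_{B(x,r)}V^\beta\,dg\le C_0r^{n-2}$ fed into Adams' estimate, and Petersen--Wei applied to $\psi-H$ with $H=\frac{n-2}{2}\partial_{\tilde r}\log h$. Part (c), however, has a real gap in the error bookkeeping, and it is exactly where the range of $q$ comes from.

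\textbf{Where you differ from the paper, and what works.} Your first option for $h$ is essentially the paper's: the first eigenfunction of $-\Delta-V$, $h=\phi^2$, with the Riccati inequality derived from the $g$-Bochner formula. Your Poisson option is genuinely different and also works if the sign is $\Delta\varphi=-2(V-\bar V)$ for $h=e^{\varphi}$. Along a unit-speed $\tilde g$-geodesic, $\operatorname{Hess}\varphi(\dot\gamma,\dot\gamma)=(\varphi\circ\gamma)''+(\partial_{\tilde r}\varphi)^2-\frac12 h^{-1}|\nabla\varphi|^2$. Combined with the conformal Ricci formula, this gives exactly
\[
\partial_{\tilde r}(\psi-H)+\frac{\psi^2}{n-1}+\frac{2\psi}{\tilde r}\le \bar V h^{-1}+\frac{H^2}{n-2}.
\]
This is cleaner than the paper's inequality, since there are no $\Phi$-terms. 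Part (a) is then immediate from $|\varphi|\le C\int_M (V+\bar V)\,d^{2-n}\,dg\le C$; the eigenfunction route instead needs separate upper and positive lower bounds for $\phi$.

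\textbf{H\"older exponent.} The plain embedding $W^{1,p}\subset C^{1-n/p}$ is not enough. Its exponent tends to $0$ as $\beta\downarrow\frac{2n}{n+2}$, while $\frac{2(\beta-1)}{3\beta-2}\to\frac{n-2}{2(n-1)}$. You have two correct options. One is the Morrey bound $\int_{B(x,r)}|\nabla\varphi|\,dg\le C r^{n-1+(2-2/\beta)}$, obtained directly from the Green representation, plus Poincar\'e/Campanato. This gives $C^{2-2/\beta}$, stronger than needed. The other is the paper's argument: split the Green representation at $d(x_0,y)=N d(x,x_0)$ and balance $C/N$ against $C(Nd(x,x_0))^{2(1-1/\beta)}$. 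That balancing, not an interpolation, produces the exponent $\frac{2(\beta-1)}{3\beta-2}$.

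\textbf{The gap in (c).} Your list of error terms is incomplete. Put $\chi=\psi-H$ and multiply by $\chi_+^{q-2}\sqrt{\det\tilde g}$. Integrating $\partial_{\tilde r}\chi\,\chi_+^{q-2}$ by parts produces $\partial_{\tilde r}\log\sqrt{\det\tilde g}=\psi+\frac{n-1}{\tilde r}$. The good terms must then be rewritten in terms of $\chi$: $\psi^2\ge\chi^2+2H\chi$, and $\frac{2\psi}{\tilde r}\chi_+^{q-2}=\frac{2\chi_+^{q-1}}{\tilde r}+\frac{2H}{\tilde r}\chi_+^{q-2}$. So besides $H^2\chi_+^{q-2}$ and $H\chi_+^{q-1}$ there is a third error term, $\frac{|H|}{\tilde r}\chi_+^{q-2}$.

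Now $1/\tilde r\in L^s(M,\tilde g)$ uniformly only for $s<n$, and (b) gives $H\in L^t$ for $t<\frac{2\beta}{2-\beta}$. H\"older therefore requires $\frac2q>\frac1n+\frac{2-\beta}{2\beta}$, i.e.\ $q<\frac{4n\beta}{2n-(n-2)\beta}$. This is the $\epsilon$-choice in the paper's bound for $R_2$; no Sobolev inequality for $V^{\beta/2}$ is involved.

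Your threshold $q<\frac{4\beta}{2-\beta}$ is also an arithmetic slip. Since $\|\,|\nabla\varphi|^2\|_{L^{q/2}}=\|\nabla\varphi\|^2_{L^q}$, the $H^2$ and $H\chi_+^{q-1}$ terms need $q<\frac{2\beta}{2-\beta}$. The stated range already implies this because $\beta>\frac{2n}{n+2}$.

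With all three terms bounded by H\"older, absorption gives $\|\chi_+\|_{L^q}\le C$ for $q>n$, which is what makes $\frac{1}{n-1}-\frac{1}{q-1}>0$. Then $\psi_+\le\chi_++|H|$ yields (c).

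The obstacle you flag at the end is not a real one. The boundary term $\frac{1}{q-1}\chi_+^{q-1}\sqrt{\det\tilde g}$ at the cut point has the good sign, exactly as in Petersen--Wei. It vanishes at $\tilde r=0$ because $h$ is smooth.
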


Condition (iii) is satisfied by many manifolds under mild conditions (See \cite[Chapter 5]{Gri} and \cite[Chapter 3]{Lau}).
It is also satisfied by a compact Ricci flow in a finite time interval with bounded scalar curvature (cf. \cite[Chapter 6]{Zhang}).

An immediate consequence of Theorem \ref{th:1.1} is the classical volume comparison theorem for both the original and conformal metrics, modulo a small error term which goes to $0$ as the scale goes to $0$ at a definite rate.

\begin{corollary}\lab{volcomp} 
The following conclusions hold:	
\begin{itemize}		
\item[(a).] Given $q\in(n,\,\frac{4n\beta}{2n-(n-2)\beta})$, there exists a constant $\tilde  b_q$ depending only on $q$ and the basic parameters $C_0$, $\beta$, $\operatorname{diam} M$, $C_S$ and $n$ in Theorem \ref{th:1.1} such that
	\[
	\frac{d}{dr}\left(\frac{\operatorname{Vol}_{\tilde  g}(B_{\tilde  g}(x,r))}{r^n}\right)
	\le \frac{\tilde b_q}{r^{\frac{n}{q}}},
	\]
	\[
	\frac{\operatorname{Vol}_{\tilde  g}(\partial B_{\tilde  g}(x, r_2))}{r_2^{n-1}} -
	\frac{\operatorname{Vol}_{\tilde  g}(\partial B_{\tilde  g}(x, r_1))}{r_1^{n-1}}
	\le \tilde  b_q \left(r^{\frac{q-n}{q-1}}_2 - r^{\frac{q-n}{q-1}}_1 \right)^\frac{q-1}{q}.
	\]
	
	\item[(b).] Given $q\in(n,\,\frac{4n\beta}{2n-(n-2)\beta})$, there exists a constant $b_q$ such that 
	\[
	\frac{\operatorname{Vol}_{g}(B(x, r_2))}{r^n_2} - \frac{\operatorname{Vol}_{g}(B(x, r_1))}{r^n_1} 
	\le b_q \operatorname{min} \left\lbrace  r_2^{1-\frac{n}{q}}, \left(  1 - \frac{r_1}{r_2} \right) ^\delta\right\rbrace  , \qquad r_2>r_1>0,
	\]
where $b_q$ and $\delta$ depend only on $q$ and the basic parameters $C_0$, $\beta$, $\operatorname{diam} M$, $C_S$ and $n$ in Theorem \ref{th:1.1}.	
\end{itemize}	
\end{corollary}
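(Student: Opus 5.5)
Part (a) follows from Theorem \ref{th:1.1}(c) through the Petersen--Wei argument \cite{PW}, carried out for $\tilde g$. Write the volume element in $\tilde g$-polar coordinates about $x$ as $d\tilde g = \mathcal A(t,\theta)\,dt\,d\theta$, with $\mathcal A$ set to zero beyond the cut locus. Outside the cut locus we have $\partial_t \mathcal A = (\Delta_{\tilde g}\tilde r)\mathcal A$. Hence
\[
\partial_t\Bigl(\frac{\mathcal A}{t^{n-1}}\Bigr)=\psi\,\frac{\mathcal A}{t^{n-1}}\le \psi_+\frac{\mathcal A}{t^{n-1}},
\]
and this inequality persists in the distributional sense across the cut locus, since $\mathcal A$ only drops to zero there. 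Integrating over $\theta$ gives
\[
\frac{d}{dt}\frac{\operatorname{Vol}_{\tilde g}(\partial B_{\tilde g}(x,t))}{t^{n-1}}\le t^{1-n}\int_{\partial B_{\tilde g}(x,t)}\psi_+.
\]
Integrating from $r_1$ to $r_2$ and applying H\"older with exponents $q$ and $q/(q-1)$ to the measure $t^{1-n}\mathcal A\,dt\,d\theta$ bounds the right-hand side by
\[
\|\psi_+\|_{L^q(\tilde g)}\Bigl(\int_{r_1}^{r_2}\!\!\int t^{-(n-1)q/(q-1)}\,\mathcal A\,d\theta\,dt\Bigr)^{(q-1)/q}.
\]
To control the inner integral we need $\operatorname{Vol}_{\tilde g}(\partial B_{\tilde g}(x,t))\le Ct^{n-1}$. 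The standard Petersen--Wei bootstrap gives this: the ODE inequality for $y(t)=(\mathcal A/t^{n-1})^{1/q}$, namely $y'\le \frac1q\psi_+ y$, integrated together with $\psi_+\in L^q$, yields $\int_{S}\mathcal A/t^{n-1}\le C$. Inserting it, the inner integral is at most $C\int_{r_1}^{r_2} t^{-(n-1)/(q-1)}\,dt$, which is a multiple of $r_2^{(q-n)/(q-1)}-r_1^{(q-n)/(q-1)}$. This gives the sphere estimate in (a). The ball estimate follows from the standard identity
\[
\frac{d}{dr}\frac{V}{r^n}=\frac{rA(r)-nV(r)}{r^{n+1}},\qquad A(r)=\operatorname{Vol}_{\tilde g}(\partial B_{\tilde g}(x,r)),\quad V(r)=\operatorname{Vol}_{\tilde g}(B_{\tilde g}(x,r)).
\]
Writing $nV(r)=\int_0^r nA(s)\,ds$ and comparing $A(s)/s^{n-1}$ with $A(r)/r^{n-1}$ via the sphere estimate, the quantity $rA-nV$ is bounded by $C\,r^{n}\cdot r^{1-n/q}$. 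Dividing by $r^{n+1}$ gives $\tilde b_q\, r^{-n/q}$.

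Part (b) transfers these bounds to $g$ using Theorem \ref{th:1.1}(a),(b). Since $b_1\le h\le b_2$, the two distances are bi-Lipschitz with $d_{\tilde g}\approx h(x)^{1/2}d_g$ up to errors. The H\"older bound on $h$ sharpens this: on $B(x,r)$ we have $|h-h(x)|\le b_3 r^{\alpha}$ with $\alpha=\frac{2(\beta-1)}{3\beta-2}$. Consequently
\[
B_{\tilde g}\bigl(x,(h(x)^{1/2}-Cr^\alpha)r\bigr)\subset B(x,r)\subset B_{\tilde g}\bigl(x,(h(x)^{1/2}+Cr^\alpha)r\bigr),
\]
and $d\tilde g=h^{n/2}dg=(h(x)^{n/2}+O(r^\alpha))\,dg$ on $B(x,r)$. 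Thus $\operatorname{Vol}_g(B(x,r))/r^n$ equals $h(x)^{-n/2}\cdot(h(x)^{n/2})\operatorname{Vol}_{\tilde g}(B_{\tilde g}(x,h(x)^{1/2}r))/(h(x)^{1/2}r)^n$ up to an error $O(r^{\alpha'})$.

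For the first bound in (b), integrate the derivative estimate of (a) from $\tilde r_1$ to $\tilde r_2$ to obtain an increase of at most $C r_2^{1-n/q}$. Then absorb the conversion errors. These are $O(r_2^{\alpha})$, and $\tilde b_q$ can be arranged to dominate them since $r_2\le C\operatorname{diam}M$ and the volume ratio is bounded. This step needs $\alpha\ge 1-n/q$; if that fails, shrink the exponent by restricting $q$, or alternatively use the $W^{1,p}$ bound on $h$ with $p>n$, which by Morrey gives the exponent $1-n/p$.

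For the second bound, $(1-r_1/r_2)^\delta$, use the sphere estimate of (a) directly. The increase of the ratio between $r_1$ and $r_2$ is controlled by $\int_{r_1}^{r_2}$ of terms that are bounded times $r_2^{-1}$ plus $L^q$-small pieces. H\"older then gives $C(1-r_1/r_2)^{\delta}$ with $\delta=(q-n)/q$ or similar, again after the conformal transfer.

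The main obstacle is this transfer between $g$ and $\tilde g$ balls. The errors must be shown to be of the same order as the claimed right-hand sides rather than merely bounded, and this is where the H\"older/Sobolev regularity of $h$ from Theorem \ref{th:1.1}(b) must be used carefully. I would first try to compare $B(x,r)$ with $\tilde g$-balls by integrating $h^{1/2}$ along $g$-geodesics, using $h\in W^{1,p}$ with $p>n$.
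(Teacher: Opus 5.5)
Part (a) and the first bound in (b) are essentially fine. For (a) your route differs slightly from the paper's. You prove the sphere estimate first, supplying $\operatorname{Vol}_{\tilde g}(\partial B_{\tilde g}(x,t))\le Ct^{n-1}$ by the Petersen--Wei bootstrap. You then get the ball estimate from $rA-nV=\int_0^r ns^{n-1}\bigl(A(r)/r^{n-1}-A(s)/s^{n-1}\bigr)\,ds$. The paper instead applies $\frac{d}{dr}(V/r^n)\le r^{-n}\int_{B_{\tilde g}(x,r)}\psi_+\,d\tilde g$ and H\"older, using only $\operatorname{Vol}_{\tilde g}(B_{\tilde g}(x,r))\le Cr^n$, which follows from $b_1\le h\le b_2$ and Property 2. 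Your area bound is exactly what the paper's ``Likewise'' for the sphere estimate silently needs.

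In the transfer for the first bound of (b), your exponent worry resolves itself. One checks that $\alpha=\frac{2(\beta-1)}{3\beta-2}>\frac{(n+2)\beta-2n}{4\beta}=1-\frac{n}{q_{\max}}$ for all $\beta\in(\frac{2n}{n+2},2)$, where $q_{\max}=\frac{4n\beta}{2n-(n-2)\beta}$, so no restriction on $q$ is needed. Restricting $q$ would not have been legitimate anyway, since a smaller $q$ gives a weaker bound for small $r_2$. Your Morrey fallback also works, because $\frac{2\beta}{2-\beta}>q_{\max}$.

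The genuine gap is the second bound, $b_q(1-r_1/r_2)^\delta$ in (b). Passing between $B(x,r)$ and $B_{\tilde g}(x,h(x)^{1/2}r)$ costs an additive error of order $r^\alpha$ at each radius. That error depends only on the scale, not on $r_2-r_1$. So for fixed $r_2$ and $r_1\to r_2^-$ your argument leaves a residual $Cr_2^{\alpha}$ that does not tend to zero. This happens even though the $\tilde g$ estimate is even linear in $1-t_1/t_2$ there. You also cannot control the thin $g$-annulus through $\tilde g$-annuli. The sandwich only places it inside a $\tilde g$-annulus whose width stays of order $r_2^{1+\alpha}$ as $r_1\to r_2$, and you have no area bound for $g$-spheres.

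The missing idea is to work directly with $g$. Properties 1 and 2 make $(M,g)$ volume doubling, and for a doubling length space the annular decay property holds (the paper invokes Colding's Lemma 3.3):
\[
\operatorname{Vol}\bigl(B(x,r_2)\setminus B(x,r_1)\bigr)\le C\Bigl(1-\frac{r_1}{r_2}\Bigr)^{\delta}\operatorname{Vol}\bigl(B(x,r_2)\bigr),
\]
with $C,\delta$ depending only on the doubling constant. Since $\operatorname{Vol}(B(x,r_1))/r_1^n\ge \operatorname{Vol}(B(x,r_1))/r_2^n$, the difference of volume ratios is at most $\operatorname{Vol}(B(x,r_2)\setminus B(x,r_1))/r_2^n$. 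By non-inflation this is at most $C(1-r_1/r_2)^\delta$. No conformal transfer enters this half of the minimum.
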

We mention that when $q \to n$ or $\frac{4n\beta}{2n-(n-2)\beta}$ the constants $b_q$ and $\tilde  b_q$ may go to $\infty$.

{\remark  \label{rem}
In Theorem C of \cite{CMT1}, a similar result to part (b) of the corollary is presented. However, 
the term on the right hand side does not go to 0 as $r_1 \to r_2^-$,
and $r_1$ is limited to the interval $(\eta r_2, r_2)$ for some $0<\eta<1$.
On the other hand, their result holds for all $\beta>1$, which is a weaker condition than ours.
The main point is that in small scales, the volume ratio is almost monotone, which is a key step in developing Cheeger-Colding degeneration theory for manifolds.}

\section{Proof of Theorem \ref{th:1.1}}

Under assumptions (i) and (iii) in Theorem \ref{th:1.1}, the following properties hold on $(M, g)$,
which will be used explicitly or implicitly in the proof of the theorem.

1. $(M,g)$ is non-collapsed: there exists a constant $C=C (n, \beta, C_0, C_S, \operatorname{diam}M)>0$ such that
\begin{equation} \label{non-collapsed}
	\operatorname{Vol}(B(x,r))
	\ge
	C^{-1} r^n
\end{equation}
for all $x\in M$ and $0<r\le \operatorname{diam}M$.

2. $(M,g)$ is non-inflated:
there exists a constant $C=C (n, \beta, C_0, C_S, \operatorname{diam}M)>0$ such that
\begin{equation} \label{non-inflated}
\operatorname{Vol}(B(x,r))
\leq
C r^n
\end{equation}
for all $x \in M$ and $0<r\le\operatorname{diam}M.$

3. Poincar\'e inequality:
for every geodesic ball $B=B(x,r)$ with $0<r\leq \operatorname{diam}M$ and every $\psi\in C^1(B)$,
there exists a constant $C_P=C_P (n, \beta, C_0, C_S, \operatorname{diam}M)>0$ such that
\begin{equation}
	\int_B |\psi-\psi_B|^2\,dg
	\leq
	C_P r^2
	\int_B |\nabla\psi|^2\,dg,
\end{equation}
where
$
\psi_B
=
\frac{1}{\operatorname{Vol}(B)}
\int_B\psi\,dg 
$
denotes the average of $\psi$ over $B$.

Let us explain why.
The Sobolev inequality in assumption (iii) of Theorem \ref{th:1.1} implies Property 1 as well as the heat kernel upper bound in \eqref{gulbwan}. Both are classical results that can be found in many papers and books. 
For example, see the references cited in \cite[Sections 4.1 and 4.2]{Zhang}, although \cite{Zhang} is not the first to establish them.
Under the $K_\beta$ Kato type condition with $\beta>1$ and the heat kernel upper bound,
one can derive a Li-Yau gradient estimate (cf. \cite[Remark 1.3]{ZqZm} or \cite[Section 3.6]{Carron}),
which in turn yields a lower bound for the heat kernel in \eqref{gulbwan},
and hence gives Property 2.
More precisely,
let $G=G(x, t, y)$ be the heat kernel on
$(M, g)$. Then there exist positive constants $a_1$ and $a_2$ such that
\be
\lab{gulbwan}
\frac{a_1}{t^{\frac{n}{2}}} e^{\frac{-a_{2}
		d(x, y)^2}{t}} \le 
	G(x, t, y) \le \frac{1}{a_1 t^{\frac{n}{2}}} e^{- \frac{d(x, y)^2}{a_2 t}}, \qquad t \in (0, 1]. 
\ee
In addition, the Li-Yau gradient estimate and \eqref{gulbwan} imply Property 3. 
The arguments can be found in \cite[Chapter 6]{Gri} and \cite[Chapter 5]{Lau}.
See also \cite{Rose} and \cite{Carron} for two related definition of Kato class using the heat kernel, where similar properties were proven under smallness of the $K_1$ Kato type norm. It should be pointed out that a priori the heat kernel bound is not yet known so it is harder to check Kato norms defined with heat kernels.
 
Recall that the normalized Green's function $\Gamma$ changes sign on compact manifolds. 
Nevertheless,
there exists a positive constant $C=C (n,\beta,C_0,C_S,\operatorname{diam}M)$ such that
\begin{equation}
	\lab{Green} 
	|\Gamma(x, y) | \le \frac{C}{d(x, y)^{n-2}}, \quad
	|\nabla \Gamma(x, y) | \le \frac{C}{d(x, y)^{n-1}},
\end{equation}
which can be derived from the bounds on the heat kernel and the corresponding gradient estimate as in \cite[Lemma 2.3]{TZ1}.

Next, we recall the definitions of the classical maximal function, the fractional maximal function and the Riesz potential on the manifold, which will be used in the proof of Theorem \ref{th:1.1}.

\begin{definition}[Hardy-Littlewood maximal function] \label{Mf(x)}
	For a locally integrable function \(f\) on \(M\), the Hardy-Littlewood maximal function is defined by
	\[
	(Mf)(x):=\sup_{0<r\le \operatorname{diam}M}\frac{1}{\operatorname{Vol}(B(x, r))}\int_{B(x, r)} |f(y)|\,dg(y).
	\]
\end{definition}

\begin{definition}[Fractional maximal function] \label{Malphaf(x)}
	For \(0\le \sigma <n\), the fractional maximal function of \(f\) is given by
	\[
	(M_\sigma f)(x):=\sup_{0<r\le \operatorname{diam}M} (\operatorname{Vol}(B(x, r)))^{\frac{\sigma-n}{n}}\int_{B(x, r)} |f(y)|\,dg(y).
	\]
	When \(\sigma=0\), this reduces to the classical Hardy-Littlewood maximal function \(Mf\).
\end{definition}

\begin{definition}[Riesz potential] \label{If(x)}
	For \(0<\alpha<n\), the Riesz potential (or fractional integral) of order \(\alpha\) is defined as
	\[
	(I_\alpha f)(x):=\int_M \frac{|f(y)|}{d(x,y)^{n-\alpha}}\,dg(y),
	\]
	whenever the integral converges.
\end{definition}

These operators are natural geometric extensions of the classical Euclidean operators to the setting of a Riemannian manifold. 
More precisely, their definitions are formulated intrinsically in terms of the geodesic distance $d(x,y)$, the Riemannian volume measure $dg$ and the volume of geodesic balls $B(x,r)$, thereby encoding the underlying geometry of the manifold. 

In addition, we also need the embedding result in \cite[Lemma 2.4]{TZ}.
\begin{lemma}
	\lab{leimbed}
	Let $V$ be a smooth function, $p$ be a point on $M$ and $r$ be a
	positive number such that $r \le \operatorname{diam}M/2$. Then for any smooth function $\phi$
	on $M$, the following embedding result holds
	\begin{equation}
		\al
		&\int_{B(p, r)} |V(x)| \phi^2(x) dg(x) \\
		&\le C \sup_{z \in B(p, 2r)} \int_{B(z, 2r)}
		\frac{|V(x)|}{d(z, x)^{n-2}} dg(x) \left( \Vert \nabla \phi \Vert^2_{L^2(B(p, 2r))}
		+ r^{-2} \Vert  \phi \Vert^2_{L^2(B(p, 2r))} \right).
		\eal
	\end{equation} 
\end{lemma}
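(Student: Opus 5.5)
The plan is to localize with a cutoff, represent the localized function through the Green's function so that it is controlled by a Riesz potential of its gradient, and then bound the weighted $L^2$ norm of that potential by the Kato-type quantity via a $TT^*$ and Schur-test argument.

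\emph{Localization and representation.} Choose a cutoff $\eta$ with $\eta\equiv1$ on $B(p,r)$, $\operatorname{supp}\eta\subset B(p,2r)$ and $|\nabla\eta|\le C/r$, and set $u=\eta\phi$. Then $\int_{B(p,r)}|V|\phi^2\le\int_{B(p,2r)}|V|u^2$, and
$$\|\nabla u\|_{L^2}^2\le 2\|\nabla\phi\|^2_{L^2(B(p,2r))}+Cr^{-2}\|\phi\|^2_{L^2(B(p,2r))},$$
which is exactly the right-hand factor in the lemma. Green's formula for the normalized Green's function $\Gamma$ on the compact manifold gives
$$u(x)=\bar u-\int_M\langle\nabla_y\Gamma(x,y),\nabla u(y)\rangle\,dg(y),\qquad \bar u=\operatorname{Vol}(M)^{-1}\int_M u\,dg.$$
By \eqref{Green}, $|u(x)|\le|\bar u|+C\,(I_1f)(x)$ with $f=|\nabla u|$, where $f$ is supported in $B(p,2r)$.

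\emph{The constant term.} Cauchy--Schwarz gives $\bar u^2\le \operatorname{Vol}(B(p,2r))\,\|u\|_2^2/\operatorname{Vol}(M)^2$. Since $d(p,x)<2r$ on the ball, $\int_{B(p,2r)}|V|\le(2r)^{n-2}\int_{B(p,2r)}|V|\,d(p,x)^{2-n}$, and the latter is bounded by the Kato quantity with $z=p$. Combining this with the non-inflated bound \eqref{non-inflated} and the non-collapsed bound \eqref{non-collapsed} (the latter applied to $\operatorname{Vol}(M)\ge C^{-1}(\operatorname{diam}M)^n$), and using $r\le\operatorname{diam}M$, this term is at most $C\,K\,r^{-2}\|\phi\|^2_{L^2(B(p,2r))}$.

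\emph{The main term.} It remains to show
$$\int_{B(p,2r)}|V|\,(I_1f)^2\le C\,K\,\|f\|_2^2 .$$
Write $W=|V|\chi_{B(p,2r)}$ and $Tf=W^{1/2}I_1f$, restricted to $f$ supported in $B(p,2r)$. Then $\|T\|^2=\|TT^*\|$, and $TT^*$ has kernel bounded by
$$W^{1/2}(x)\,k(x,y)\,W^{1/2}(y),\qquad k(x,y)=\int_{B(p,2r)}d(x,w)^{1-n}d(w,y)^{1-n}\,dg(w).$$
I will show $k(x,y)\le C\,d(x,y)^{2-n}$. To do so, split the $w$-integral into the regions where $w$ is closer to $x$ or closer to $y$, and integrate over dyadic annuli using \eqref{non-inflated}; this is the usual composition $I_1I_1\lesssim I_2$. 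Next apply Schur's test to this symmetric kernel with test function $W^{1/2}$:
$$\int W^{1/2}(x)\,d(x,y)^{2-n}\,W(y)\,dg(y)\le W^{1/2}(x)\sup_{x\in B(p,2r)}\int_{B(p,2r)}\frac{|V|(y)}{d(x,y)^{n-2}}\,dg(y).$$
This yields $\|TT^*\|\le C\sup_x\int_{B(p,2r)}|V|\,d(x,\cdot)^{2-n}$.

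\emph{Main obstacle.} The delicate point is matching the supremum in the lemma exactly: there the integrals run over $B(z,2r)$ with $z\in B(p,2r)$. For $x\in B(p,2r)$, however, $B(p,2r)\subset B(x,4r)$ and need not lie in $B(x,2r)$. I would handle this by covering $B(p,2r)$ with boundedly many balls $B(z_i,r/2)$, $z_i\in B(p,2r)$, where the number of balls is controlled by volume doubling from \eqref{non-collapsed} and \eqref{non-inflated}. For $y\in B(z_i,r/2)$: if $d(x,y)\ge r/2$, then $d(x,y)^{2-n}\le C\,d(z_i,y)^{2-n}$; if $d(x,y)<r/2$, then $y\in B(x,2r)$, which is one of the allowed balls since $x\in B(p,2r)$. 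If the Schur step turns out awkward with this truncation, the fallback is to localize the Green representation itself, using the Poincar\'e inequality (Property 3) to control the resulting mean-value term.
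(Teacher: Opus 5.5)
The paper gives no proof of this lemma. It imports it verbatim from \cite[Lemma 2.4]{TZ}, so there is no in-paper argument to compare yours against. Your proposal is a correct, self-contained proof of the statement as written.

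Your argument rests on three ingredients:
\begin{itemize}
\item the gradient bound in \eqref{Green}, which gives the representation $|u|\le|\bar u|+C\,I_1|\nabla u|$;
\item non-inflation \eqref{non-inflated}, which gives $k(x,y)\le C\,d(x,y)^{2-n}$ and controls the dyadic sums;
\item non-collapsing \eqref{non-collapsed}, which gives $\operatorname{Vol}(M)\ge C^{-1}(\operatorname{diam}M)^n$ and bounds the number of balls in your covering.
\end{itemize}
All three are established in the paper before the lemma is invoked in Step 2.1. Your constant $C$ therefore depends only on $n$ and those geometric constants, which is exactly how the lemma is used there.

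The $TT^*$/Schur step is the classical Aizenman--Simon argument that Kato-class potentials are form bounded, transplanted to $I_1$ via $I_1\circ I_1\lesssim I_2$. Your two-region split for $k$ works: on the region where $w$ is closer to $x$, one has $d(w,y)\ge\max\{d(x,w),d(x,y)/2\}$. Your covering by balls $B(z_i,r/2)$ does recover the localized supremum $\sup_{z\in B(p,2r)}\int_{B(z,2r)}$ in the statement. In the case $d(x,y)\ge r/2$ you have $d(z_i,y)<r/2\le d(x,y)$, so the comparison holds with constant $1$.

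Two small points need tidying.
\begin{itemize}
\item The Schur test function $W^{1/2}$ may vanish. Either restrict $TT^*$ to $\{W>0\}$, where the kernel lives, or use $W^{1/2}(x)W^{1/2}(y)|g(x)||g(y)|\le\frac12\bigl(W(y)g(x)^2+W(x)g(y)^2\bigr)$ together with the symmetry of $k$. The latter gives $\langle TT^*g,g\rangle\le\sup_{x\in B(p,2r)}\int k(x,y)W(y)\,dg(y)\,\|g\|_2^2$ directly.
\item The identity $\|T\|^2=\|TT^*\|$ requires $T$ to be bounded a priori. This holds because $V$ is smooth and $\int_{B(p,2r)}d(x,w)^{1-n}\,dg(w)\le Cr$.
\end{itemize}
The Poincar\'e fallback you mention is not needed.
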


We now present the

\noindent
\textbf{\emph{Proof of Theorem \ref{th:1.1}.}}
It consists of the following four steps.

\noindent{\it Step 1.} The Bochner inequality under conformal transformations.

Take a positive smooth function
$
h=e^{2f}>0
$
on $(M, g)$ and define the new conformal metric
$
\tilde g:=hg.
$
The direct computation gives the transformation rules for $\Delta_{\tilde{g}}$, $\nabla_{\tilde{g}}$, $\operatorname{Hess}_{\tilde{g}}$ of a smooth function $u$ (see \cite[pp. 8-9]{TZZZZ}). We deduce
\[
\al
\Delta ( |\nabla_{\tilde g} u |^2_{\tilde g}) &= \Delta ( h^{-1} |\nabla u |^2 ) = \Delta ( e^{- 2 f} |\nabla u |^2 )\\
&= | \nabla u|^2 \Delta e^{-2 f} + 2\left\langle \nabla | \nabla u |^2, \nabla e^{-2f}\right\rangle  + e^{-2f} \Delta |\nabla u|^2.
\eal
\]
Using the classical Bochner formula, we obtain
\begin{equation} \label{Bochner}
\begin{aligned}
	\Delta\bigl(|\nabla_{\tilde g}u|_{\tilde g}^2\bigr)
	&= |\nabla u|^2\bigl(e^{-2f}4|\nabla f|^2-2e^{-2f}\Delta f\bigr) -4e^{-2f}\langle \nabla|\nabla u|^2,\nabla f\rangle \\
	&\quad +e^{-2f}\left( 2|\operatorname{Hess}u|^2+2\langle \nabla\Delta u,\nabla u\rangle+2\operatorname{Ric}(\nabla u,\nabla u)\right) .
\end{aligned}
\end{equation}
Let $V(x)=|\operatorname{Ric}^{-}(x)|$.
Without loss of generality we can assume $V$ is smooth by choosing a smooth majorant of $|Ric^-(x)|$.
Substituting this into \eqref{Bochner} yields the following modified inequality
\begin{equation}\label{eq:bochner_mod}
	\begin{aligned}
		e^{2f}\Delta\left( |\nabla_{\tilde g}u|_{\tilde g}^2\right) 
		&\ge |\nabla u|^2\left( 4|\nabla f|^2-2\Delta f-2V\right)  -4\langle \nabla|\nabla u|^2,\nabla f\rangle \\
		&\quad +2|\operatorname{Hess}u|^2+2\langle \nabla\Delta u,\nabla u\rangle.
	\end{aligned}
\end{equation}
It is easy to see that
$
|\nabla u|^2\bigl(4|\nabla f|^2-2\Delta f-2V\bigr)
$
is the bad term, and we will deal with it in the next step.

\noindent{\it Step 2.} Uniform boundedness and regularity of  $ h=\phi^2.$

\noindent {\it Step 2.1.}  Eliminating the bad term in \eqref{eq:bochner_mod}.

From the assumption,
there exists a constant $C_0>0$ such that
\begin{equation*}
K\left(|\operatorname{Ric}^-|^\beta\right)=
\sup_{x \in M} \int_{M} \frac{|\operatorname{Ric}^-(y)|^\beta}{d^{n-2}(x, y)} dg(y) \le C_0.
\end{equation*} 
For any $0<r<\frac12\operatorname{diam}M, $
H\"older's inequality gives
\begin{align} \lab{rick1r}
	&\sup_{x\in M} \int_{B(x,r)} \frac{V(y)}{d(x,y)^{n-2}} \,dg(y) \nonumber\\ 
	&\quad\le \left( \sup_{x\in M} \int_{B(x,r)} \frac{|\operatorname{Ric}^{-}(y)|^\beta} {d(x,y)^{n-2}} \,dg(y) \right)^{\frac{1}{\beta}} \left( \sup_{x\in M} \int_{B(x,r)} \frac{1}{d(x,y)^{n-2}} \,dg(y) \right)^{1-\frac{1}{\beta}} \nonumber\\ 
	&\quad\le C_0^{\frac{1}{\beta}} \left( \sup_{x\in M} \int_{B(x,r)} d(x,y)^{2-n} \,dg(y) \right)^{1-\frac{1}{\beta}} \nonumber\\ 
	&\quad\le C C_0^{\frac{1}{\beta}} r^{2(1-\frac{1}{\beta})}.
\end{align} 
Here $C>0$ depends only on the dimension $n$ and the fixed geometric constants (independent of the scale).

Moreover, recall the embedding result in Lemma~\ref{leimbed}.
Combining \eqref{rick1r} with a simple covering argument using balls with a small but definite radius yields
\begin{equation*} 
	\label{quadrav} 
	\int_M \left( |\nabla\phi|^2-V\phi^2 \right) \,dg \ge -\rho_0 \int_M\phi^2\,dg, \qquad \forall\,\phi\in C^\infty(M)
\end{equation*}
for some constant $\rho_0>0$. 
Now, we minimize the quadratic form
$
\int_M \left(|\nabla\phi|^2 - V\phi^2\right)dg
$
subject to the normalization constraint $\|\phi\|_{L^2(M)}=1$.
Let $\rho_0\ge 0$ denote the optimal constant.
By the standard variational characterization of the lowest eigenvalue  of the operator $-\Delta-V$ and elliptic regularity, 
there exists a smooth positive function $\phi$, whose Euler-Lagrange equation is
\begin{equation} \label{eqphi} 
	\Delta\phi+V\phi-\rho_0\phi=0 \qquad\text{on }M.
\end{equation} 
Set 
$
	f:=\log\phi. 
$
It follows from \eqref{eqphi} that 
\begin{equation} \label{eqffphi} 
	\Delta f+|\nabla f|^2+V-\rho_0=0. 
\end{equation} 
Substituting \eqref{eqffphi} into \eqref{eq:bochner_mod},  we deduce 
\begin{equation}
	\lab{eq:bochner_mod1}
	\al
	e^{2 f} \Delta ( |\nabla_{\tilde g} u |^2_{\tilde g})  \ge  2 | \operatorname{Hess}  u|^2 + 2 \left\langle  \nabla \Delta u, \nabla u\right\rangle - 4 \left\langle \nabla |\nabla u|^2, \nabla f\right\rangle  - 2 \rho_0 | \nabla u|^2.
	\eal
\end{equation}
It is noteworthy that the bad term has been canceled. 

\noindent {\it Step 2.2.} The proof of Theorem \ref{th:1.1} (a) and (b).

The proof of part (a) in Theorem \ref{th:1.1} is similar to the argument 
in \cite[pp. 13-14]{TZZZZ}, so we omit the details. 
The most important part for us is the proof of part (b) of Theorem \ref{th:1.1},
which requires the new ingredient.

First we prove H\"older continuity of $\phi$.

Applying \eqref{eqphi}, 
since the mean value of $V\phi-\rho_0 \phi$ is zero,
we deduce
\[
\phi(x) = \int_M \Gamma(x, y) (V(y)-\rho_0) \phi(y) dg(y).
\]
Next, we apply a technique from \cite[pp. 423-424]{CFG}.
For any $x,x_0\in M$ such that
$
d(x, x_0)<10^{-\frac{3\beta-2}{2(\beta-1)}},
$
let $N>10$ be a large number to be determined later.
One derives
\begin{equation}
\lab{phix-x0}
\al
|\phi(x)-\phi(x_0)| 
&\le \int_M |\Gamma(x, y)-\Gamma(x_0, y)| |V(y)-\rho_0| dg(y)\\
&=\int_{d(x_0, y)>N d(x, x_0)}|\Gamma(x, y)-\Gamma(x_0, y)| |V(y)-\rho_0| dg(y)\\
&\quad + \int_{d(x_0, y) \le N d(x, x_0)} |\Gamma(x, y)-\Gamma(x_0, y)| |V(y)-\rho_0|dg(y)\\
& \equiv J_1 + J_2.
\eal
\end{equation} 
Under the condition ${d(x_0, y)>N d(x, x_0)}$, 
using \eqref{Green}, we deduce
\[
|\Gamma(x, y)-\Gamma(x_0, y)| \le  \frac{C d(x, x_0)}{d(x_0, y)^{n-1}} =
\frac{C d(x, x_0)}{d(x_0, y) \, d(x_0, y)^{n-2}}
\le  \frac{C}{N d(x_0, y)^{n-2}}.
\]
Consequently,
\begin{equation} \lab{i1c1k}
\al
J_1 &\le \frac{C}{N} \int_{d(x_0, y)>N d(x, x_0)} \frac{|V(y)-\rho_0| }{d(x_0, y)^{n-2}} dg(y)\\
&\le\frac{C}{N} \left( \int_{d(x_0, y)>N d(x, x_0)} \frac{|V(y)-\rho_0|^\beta }{d(x_0, y)^{n-2}} dg(y) \right)^{\frac{1}{\beta}}
\, \left( \int_{d(x_0, y)>N d(x, x_0)} \frac{1}{d(x_0, y)^{n-2}} dg(y) \right)^{1-\frac{1}{\beta}}\\
&\le C_1 \frac{C}{N} \left( K(|V-\rho_0|^\beta)\right) ^{\frac{1}{\beta}},
\eal
\end{equation} 
where the last step used \eqref{non-inflated}.
For the term $J_2$, we apply H\"older's inequality to get
\[
\al
J_2 
&\le C \int_{d(x_0, y) \le N d(x, x_0)}  \frac{|V(y)-\rho_0| }{d(x_0, y)^{n-2}} dg(y)
+ C \int_{d(x_0, y) \le N d(x, x_0)}  \frac{|V(y)-\rho_0| }{d(x, y)^{n-2}} dg(y)\\
& \le C \left( \int_{d(x_0, y) \le N d(x, x_0)} \frac{|V(y)-\rho_0|^\beta }{d(x_0, y)^{n-2}} dg(y) \right)^{\frac{1}{\beta}}
\, \left( \int_{d(x_0, y) \le N d(x, x_0)} \frac{1}{d(x_0, y)^{n-2}} dg(y) \right)^{1-\frac{1}{\beta}}\\
&\quad  + C \left( \int_{d(x_0, y) \le N d(x, x_0)} \frac{|V(y)-\rho_0|^\beta }{d(x, y)^{n-2}} dg(y) \right)^{\frac{1}{\beta}}
\, \left( \int_{d(x_0, y) \le N d(x, x_0)} \frac{1}{d(x, y)^{n-2}} dg(y) \right)^{1-\frac{1}{\beta}}.
\eal
\]
Again using \eqref{non-inflated} and proceeding as in \eqref{rick1r}, we obtain
\begin{equation}\lab{i2c2k}
J_2 \le C_2 C \left( K(|V-\rho_0|^\beta)\right)^{\frac{1}{\beta}}  \left( N d(x, x_0) \right) ^{2(1-\frac{1}{\beta})}.
\end{equation}
Substituting \eqref{i2c2k} and \eqref{i1c1k} into \eqref{phix-x0}, we conclude that
\[
|\phi(x)-\phi(x_0)| \le \frac{C_3}{N} + C_4 (N d(x, x_0))^{2(1-\frac{1}{\beta})}.
\]
Taking $N=d(x, x_0)^{-\frac{2(\beta-1)}{3\beta-2}}$, it follows that
\[
|\phi(x)-\phi(x_0)| \le C_5 d(x, x_0)^{\frac{2(\beta-1)}{3\beta-2}}.
\]
Thus, $\phi\in C^{\frac{2(\beta-1)}{3\beta-2}}(M).$ 

From now, we prove the estimate
$\|\nabla h\|_{L^p(M)}\le C^*_p$ of part (b).
To this end, we need the following lemma.

\begin{lemma}
	\label{lemma1.1}
	Let $(M,g)$ be as in Theorem \ref{th:1.1}. Define $W:=V-\rho_0.$
	Assume that
	$
	\beta\in(\frac{2n}{n+2}, 2)
	$
	and that there exists a constant $C_0>0$ such that
	\begin{equation}
		\label{eq:1.1}
		K(|W|^\beta)=
		\sup_{x\in M}
		\int_M
		\frac{|W(y)|^\beta}{d(x,y)^{n-2}}
		\,dg(y)
		\le C_0.
	\end{equation}
	Then
\[
I_1 W\in L^p(M),\qquad
p\in(n,\frac{2\beta}{2-\beta}).
\]
\end{lemma}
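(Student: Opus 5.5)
The plan is to turn the Kato bound \eqref{eq:1.1} into a Morrey-type bound for $W$, and then run a Hedberg-type pointwise argument (the mechanism behind Adams' theorem for Riesz potentials on Morrey spaces). This gives the endpoint bound $I_1W\in L^{p_*}(M)$ with $p_*:=\frac{2\beta}{2-\beta}$. Since $M$ is compact, H\"older's inequality then gives $I_1W\in L^p(M)$ for every $p\le p_*$, and in particular for $p\in(n,\frac{2\beta}{2-\beta})$. This interval is nonempty exactly when $\frac{2\beta}{2-\beta}>n$, i.e.\ $\beta>\frac{2n}{n+2}$, which explains the hypothesis on $\beta$.

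\emph{Step 1 (Morrey estimate).} For $x\in M$ and $0<r\le\operatorname{diam}M$ we have $d(x,y)^{n-2}\le r^{n-2}$ on $B(x,r)$. Hence \eqref{eq:1.1} gives
\[
\int_{B(x,r)}|W|^\beta\,dg\le C_0\,r^{n-2}.
\]
In particular $\|W\|_{L^\beta(M)}^\beta\le C_0(\operatorname{diam}M)^{n-2}$. Combining this with H\"older's inequality and the non-inflated bound \eqref{non-inflated} yields
\[
\int_{B(x,r)}|W|\,dg\le \Bigl(\int_{B(x,r)}|W|^\beta\Bigr)^{1/\beta}\operatorname{Vol}(B(x,r))^{1-1/\beta}\le C\,r^{\,n-\frac{2}{\beta}}.
\]

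\emph{Step 2 (Hedberg splitting).} Fix $x$ and a parameter $0<\delta\le\operatorname{diam}M$, and split $I_1W(x)$ into the regions $d(x,y)<\delta$ and $d(x,y)\ge\delta$, each decomposed into dyadic annuli $2^{k}\delta\le d(x,y)<2^{k+1}\delta$.

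For the near part, we use \eqref{non-inflated} and \eqref{non-collapsed}, so that $\operatorname{Vol}(B(x,s))\approx s^n$. On each annulus $\{2^{-k-1}\delta\le d<2^{-k}\delta\}$ the integral is at most $C(2^{-k}\delta)\,MW(x)$. Summing over $k$ gives at most $C\delta\, MW(x)$, with $M$ as in Definition \ref{Mf(x)}.

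For the far part, Step 1 bounds the annulus at scale $s=2^k\delta$ by $C s^{1-n}s^{n-2/\beta}=Cs^{1-2/\beta}$. Because $\beta<2$ the exponent $1-2/\beta$ is negative, so the geometric series sums to $C\delta^{1-2/\beta}$. The annuli stop at $\operatorname{diam}M$, which only helps.

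Thus
\[
I_1W(x)\le C\bigl(\delta\,MW(x)+\delta^{1-2/\beta}\bigr).
\]
Choose $\delta=(MW(x))^{-\beta/2}$ when this is $\le\operatorname{diam}M$; this gives
\[
I_1W(x)\le C\,(MW(x))^{1-\beta/2}.
\]
Otherwise $MW(x)$ is bounded by a constant depending on $\operatorname{diam}M$. Taking $\delta=\operatorname{diam}M$ then gives $I_1W(x)\le C$.

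\emph{Step 3 (integration).} Since $(1-\frac{\beta}{2})p_*=\beta$, we get
\[
\int_M (I_1W)^{p_*}\,dg\le C+C\int_M (MW)^{\beta}\,dg\le C+C\|W\|_{L^\beta}^\beta\le C.
\]
Here we use the $L^\beta$-boundedness of the Hardy--Littlewood maximal operator. This holds because $\beta>1$ and $dg$ is doubling on scales up to $\operatorname{diam}M$ by \eqref{non-collapsed} and \eqref{non-inflated}, which allows the usual Vitali covering argument. H\"older on the compact $M$ then finishes the proof.

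The step needing most care is Step 2. One has to check that the dyadic sums use only the two-sided volume bounds at scales $\le\operatorname{diam}M$, with constants independent of $x$ and $\delta$. One also has to handle correctly the case where the optimal $\delta$ exceeds $\operatorname{diam}M$. The rest is routine.
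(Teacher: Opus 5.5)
Your proposal is correct and follows essentially the same route as the paper. In both, the Kato bound becomes the codimension-$2$ Morrey estimate $\int_{B(x,r)}|W|^\beta\,dg\le C_0r^{n-2}$, and then a Hedberg/Adams near--far dyadic splitting of $I_1W$, combined with the $L^\beta$-boundedness of the maximal function, yields the endpoint $p=\frac{2\beta}{2-\beta}$. The differences are cosmetic: you insert the uniform bound for $M_{2/\beta}W$ before choosing the cutoff radius and split cases when $\delta>\operatorname{diam}M$, whereas the paper keeps the ratio $M_{2/\beta}W/MW$ and notes $R_0\le C\operatorname{diam}M$; you also spell out the H\"older step from the endpoint down to the open interval $(n,\frac{2\beta}{2-\beta})$.
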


\begin{proof}[\textbf{Proof}]

For every $y\in B(x, r)$, we have
$d(x,y)\le r.$
Therefore,
$$d(x,y)^{-(n-2)}\ge r^{-(n-2)}.$$
Using (\ref{eq:1.1}), we obtain
\begin{equation*}\label{eq:1.2}
	C_0\ge
	\int_{B(x, r)}\frac{|W(y)|^\beta}{d(x,y)^{n-2}}dg(y)
	\ge r^{-(n-2)}\int_{B(x, r)}|W(y)|^\beta dg(y),
\end{equation*}
which implies
\begin{equation}\lab{MW}
\int_{B(x, r)}|W(y)|^\beta dg(y)\le C_0r^{n-2}.
\end{equation} 
This is precisely a Morrey-type estimate of codimension 2.	

Applying H\"older's inequality together with \eqref{non-inflated}, we get
\begin{equation*}
\begin{aligned}
	\int_{B(x, r)}|W(y)|\,dg(y)
	&\le
	\left(\int_{B(x, r)}|W(y)|^\beta\,dg(y)\right)^{\frac{1}{\beta}}
	\operatorname{Vol}(B(x, r))^{1-\frac{1}{\beta}} \\
	&\le C \left(C_0 r^{n-2}\right)^{\frac{1}{\beta}} \left(r^n\right)^{1-\frac{1}{\beta}}
	= C C_0^{\frac{1}{\beta}} r^{n-\frac{2}{\beta}}.
\end{aligned}
\end{equation*}
Thus,
\[
r^{\frac{2}{\beta}-n}\int_{B(x, r)}|W(y)|\,dg(y)
\le  C C_0^{\frac{1}{\beta}}.
\]
Using Property 1, we obtain the following fractional maximal estimate (see Definition \ref{Malphaf(x)})
$$
( M_{\frac{2}{\beta}} W) (x)\le C\sup_{0<r\le \operatorname{diam}M}r^{\frac{2}{\beta}-n}\int_{B(x, r)}|W(y)|\,dg(y)\le C C_0^{\frac{1}{\beta}}.
$$
Consequently,
\begin{equation}\lab{MW1}
\| M_{\frac{2}{\beta}} W \|_{L^\infty(M)} \le C C_0^{\frac{1}{\beta}}.
\end{equation}

Next, we use
Adams' estimate for Riesz potentials on Morrey spaces (see \cite{Adams}) which was originally established in the Euclidean setting.
Under assumptions of Theorem \ref{th:1.1}, the result can be obtained similarly on manifolds, and we outline the proof for completeness.
Let $R>0$ be a number to be determined later.
Fix \(x\in M\),
we split the Riesz potential (see Definition \ref{If(x)}) into near and far parts, i.e.
\[
(I_\alpha W)(x)=\int_{d(x,y)<R} \frac{|W(y)|}{d(x,y)^{n-\alpha}}\,dg(y)
+\int_{d(x,y)\ge R} \frac{|W(y)|}{d(x,y)^{n-\alpha}}\,dg(y)
=I_{\rm near}+I_{\rm far}.
\]

Decompose $d(x,y)<R$ into  the dyadic annuli
\[
E_{-k}(x)=\{2^{-k}R\le d(x,y)<2^{-k+1}R\},\qquad k=1,2,\dots
\]
On $E_{-k}$, we have
$
d(x,y)^{-(n-\alpha)}\le C(2^{-k}R)^{-(n-\alpha)}.
$
Thus,
\begin{equation*} \lab{I_near}
	\al 
	I_{\rm near}
	&=\int_{d(x,y)<R}\frac{|W(y)|}{d(x,y)^{n-\alpha}}dg(y)
	\le\sum_{k=1}^\infty\int_{E_{-k}}\frac{|W(y)|}{d(x,y)^{n-\alpha}}dg(y)\\
	&\le\sum_{k=1}^\infty C(2^{-k}R)^{-(n-\alpha)}\int_{B(x, \,2^{-k+1}R)} |W(y)|dg(y).\\
	\eal
\end{equation*}
By the definition of the maximal function (see Definition \ref{Mf(x)}), we obtain
\[
\int_{B(x, \,2^{-k+1}R)} |W(y)| dg(y) \le \operatorname{Vol}(B(x, \,2^{-k+1}R)) (MW)(x).
\]
Using the volume estimate
$
\operatorname{Vol}(B(x, \,2^{-k+1}R))\le C(2^{-k+1}R)^n,
$
we conclude that
\begin{equation} \lab{I_near2}
	\al 
	I_{\rm near}
	&\le C\sum_{k=1}^\infty (2^{-k-1}R)^{-(n-\alpha)}(2^{-k+1}R)^n (MW)(x)\\
	&\le \sum_{k=1}^\infty2^{-k\alpha} R^\alpha (MW)(x)
	\le C_1 R^\alpha (MW)(x).\\
	\eal
\end{equation}
Moreover, we decompose $d(x,y)\ge R$ into the dyadic annuli
\[
E_k=\{2^k R\le d(x,y)<2^{k+1}R\}.
\]
Since $M$ is compact, $E_k=\varnothing$ whenever $2^kR>\operatorname{diam} M$. Therefore, only finitely many of the sets $E_k$ are nonempty.
On \(E_k\), we get
\[
d(x,y)^{-(n-\alpha)}\le (2^kR)^{-(n-\alpha)}.
\]
According to the definition of the fractional maximal function (see Definition \ref{Malphaf(x)}), we deduce
\begin{equation*} 
	\al 
	I_{\rm far}
	&=
	\int_{d(x,y)\ge R}\frac{|W(y)|}{d(x,y)^{n-\alpha}}\,dg(y)
	\le \sum_{k=0}^\infty\int_{E_k}\frac{|W(y)|}{d(x,y)^{n-\alpha}}\,dg(y)\\
	&\le\sum_{k=0}^\infty (2^kR)^{-(n-\alpha)}\int_{B(x, 2^{k+1}R)}|W(y)|\,dg(y)\\
	&\le\sum_{k=0}^\infty (2^kR)^{-(n-\alpha)}(2^{k+1}R)^{(n-\frac{2}{\beta})}
	(M_{\frac{2}{\beta}}W)(x)\\
	&\le C R^{\alpha - \frac{2}{\beta}} (M_{\frac{2}{\beta}} W)(x) \sum_{k=0}^{\infty} 2^{k(\alpha - \frac{2}{\beta})}.\\
	\eal
\end{equation*}
Since $\alpha < \frac{2}{\beta}$, the series satisfies
$
\sum_{k=0}^{\infty} 2^{k(\alpha - \frac{2}{\beta})} \le C.
$
Hence,
\begin{equation}  \lab{I_far}
I_{\rm far}\le C_2 R^{\alpha-\frac{2}{\beta}}(M_{\frac{2}{\beta}}W)(x).
\end{equation}
Combining \eqref{I_near2} and \eqref{I_far}, for every $0<R\le \operatorname{diam}M$,
we get
\[
(I_\alpha W)(x)\le C_1 R^\alpha (MW)(x)+C_2 R^{\alpha-\frac{2}{\beta}}(M_{\frac{2}{\beta}}W)(x),
\]
where $0<\alpha<\frac{2}{\beta}$.
Without loss of generality, we can assume that $|W|$ is not identically zero. 
Define
\[
R_0
:=
\left[ \frac{(M_{\frac{2}{\beta}}W)(x)}{(MW)(x)}\right] ^{\frac{\beta}{2}}.
\]
We can find that
$R_0\le C \operatorname{diam}M$ (from Definition \ref{Malphaf(x)}).
Choosing $R=\frac{R_0}{C}$,
we conclude that, for every \(x\in M\),
\begin{equation*}
	(I_\alpha W)(x)
	\le
	C\big[(MW)(x)\big]^{1-\frac{\alpha\beta}{2}}
	\left[
	\left(M_{\frac{2}{\beta}}W\right)(x)
	\right]^{\frac{\alpha\beta}{2}}.
\end{equation*}
Applying H\"older's inequality gives
\[
\norm{(I_\alpha W)(\cdot)} _{L^p(M)}
\le C\norm{(MW)(\cdot)} ^{1-\frac{\beta\alpha}{2}} _{L^\beta(M)}
\norm{(M_{\frac{2}{\beta}}W)(\cdot)} ^{\frac{\beta\alpha}{2}} _{L^\infty(M)}
\le  C C_0^{\frac{1}{\beta}},
\]
where $\frac{1}{p} = \frac{1}{\beta} - \frac{\alpha}{2} $.
Here we used \eqref{MW} and \eqref{MW1}.

Setting $\alpha=1$ and using the condition 
$
	\beta>\frac{2n}{n+2},
$
we deduce that
$
\frac{1}{p} = \frac{1}{\beta} - \frac{1}{2},
$
i.e.
$$p=\frac{2\beta}{2-\beta}>n.$$ 
Hence,
\[
I_1 W\in L^p(M),\qquad
p\in(n,\frac{2\beta}{2-\beta}).
\]
This completes the proof of  Lemma~\ref{lemma1.1}.
\end{proof}

From Lemma \ref{lemma1.1}, we obtain 
\[ 
|\nabla\phi(x)| \le C \int_M \frac{|W(y)|}{d(x,y)^{n-1}} \,dg(y) = C \left( (I_1 W)(x)\right) . 
\] 
Therefore, we have
\[
|\nabla\phi|\in L^p(M),\qquad
p\in(n,\frac{2\beta}{2-\beta}).
\]
The proof of Theorem \ref{th:1.1} part (b) is complete.

Next, we prove Theorem \ref{th:1.1} (c).

\noindent{\it Step 3.} The integral Laplace comparison for the conformal distance $\tilde r$.

Recalling the expressions for \(\Delta_{\tilde{g}}u\), \(\nabla_{\tilde{g}}u\) and \(\operatorname{Hess}_{\tilde{g}}u\), and substituting them into \eqref{eq:bochner_mod1}, we have
\be
\lab{modcdineq2}
\al
&\frac{1}{2} e^{2 f} \Delta ( |\nabla _{\tilde  g} u |^2_{\tilde  g})  \\
&\ge   \sum^{n}_{i, j=1} | (Hess_{\tilde  g}  \, u)_{i j}+ A_{ij}|^2
+ h \left\langle  \nabla _{\tilde  g} \left( h \Delta_{\tilde  g} u - \frac{n-2}{2} \left\langle \nabla _{\tilde  g} h, \nabla _{\tilde  g} u\right\rangle _{\tilde  g} \right), \nabla _{\tilde  g} u \right\rangle _{\tilde  g}  \\
&\quad - 2 h \left\langle \nabla _{\tilde  g} (h |\nabla _{\tilde  g} u |^2_{\tilde  g}), \nabla _{\tilde  g} f \right\rangle _{\tilde  g} -  \rho_0 h | \nabla _{\tilde  g} u|^2_{\tilde  g},
\eal
\ee
where $A_{ij}=\frac{1}{2} h^{-1} g^{kl} (\pa_i h g_{jl}+ \pa_j h g_{li}- \pa_l h g_{ij}) \pa_k u$.

Furthermore, we take the function $u$ to be the conformal distance function, i.e.
\begin{equation}
\label{u}
u=\tilde r = d_{\tilde g}(p_0,x),
\end{equation}
where $p_0\in M$ is a fixed point. 
Combining \eqref{u} and \eqref{modcdineq2} with the fact that \(|\nabla_{\tilde g}\tilde r|_{\tilde g}=1\), 
and estimating the term \(\sum^{n}_{i,j=1} |(\operatorname{Hess}_{\tilde g}u)_{ij}+A_{ij}|^2\) (see \cite[pp. 18-19]{TZZZZ}),
we derive
\be
\lab{eq:bochner_mod3}
\al
0 \ge \frac{1}{n-1} \left(\Delta_{\tilde g} \tilde r + \Phi \right)^2
+ h^{-1} \pa_{\tilde r} \left(h \Delta_{\tilde g} \tilde  r - \frac{n-2}{2} \partial_{\tilde r} h \right) - h^{-2} | \nabla_{\tilde g} h |^2_{\tilde g} -  \rho_0 h^{-1}, 
\eal
\ee 
where
\[
\Phi \equiv h^{-1} \sum^{n-1}_{i=1} A_{ii},\qquad
A_{ii}= \frac{1}{2} h^{-1} g^{kl} (\partial_i h g_{il}+ \partial_i h g_{li}- \partial_l h g_{ii}) \partial_k \tilde r,
\]
so that
\[
|\Phi| \le  c \, h^{-1} |\nabla h | \, |\nabla \tilde r| = c \, |\nabla_{\tilde g} h |_{\tilde g} \, |\nabla_{\tilde g} \tilde r|_{\tilde g}= c \, |\nabla_{\tilde g} h |_{\tilde g}.
\] 
Here $c$ is a dimensional constant. 

Let
$
\psi:=\Delta_{\tilde g}\tilde r-\frac{n-1}{\tilde r}.
$
Expanding \eqref{eq:bochner_mod3} and carrying out a series of calculations (see \cite[pp. 19-20]{TZZZZ}),
we obtain a Riccati-type differential inequality for $\psi$, i.e.
\be
\lab{ricatpsi}
\al
&\frac{1}{n-1} \psi^2 +  \partial_{\tilde r} (\psi - H)+ \frac{2 \psi}{\tilde r} + \left(\frac{2 \Phi }{n-1}  + \frac{2 H}{n-2} \right) \psi + \left( 2 \Phi + \frac{2(n-1) H}{n-2} \right) \frac{1}{\tilde r}  \\
&\le \frac{2 H^2}{n-2} + h^{-2} | \nabla_{\tilde g} h |^2_{\tilde g} +  \rho_0 h^{-1}.
\eal
\ee
Here $H:=\frac{n-2}{2}h^{-1}\partial_{\tilde r}h.$
The Riccati-type inequality prepares the ground for the integral estimates in the next step,
we now proceed with the proof below.

\noindent{\it Step 4.}  $L^q$ norm estimate for $\psi_+$.

Follow the method of Petersen and Wei \cite{PW}.
Choose
$
q\in(n-2,\,\frac{4n\beta}{2n-(n-2)\beta}-2).
$
Multiplying \eqref{ricatpsi} by $(\psi-H)_+^q\sqrt{\det\tilde g}$, 
and integrating along a geodesic from \(p_0\) to the cut-locus,
we deduce
\be
\label{psint1}
\al
& \frac{1}{n-1} \int^{r_*}_0 \psi^2 (\psi-H)^q_+ \sqrt{\det \tilde g}  d\tilde r + \underbrace{ \int^{r_*}_0 (\partial_{\tilde r} (\psi - H) )(\psi-H)^q_+ \sqrt{\det \tilde g}  d\tilde r}_{\equiv L_2} \\
&\qquad + \underbrace{\int^{r_*}_0 \frac{2 \psi}{\tilde r} (\psi-H)^q_+ \sqrt{\det \tilde g}  d\tilde r }_{\equiv L_3} + \int^{r_*}_0 \left(\frac{2 \Phi }{n-1}  + \frac{2 H}{n-2} \right) \psi  (\psi-H)^q_+ \sqrt{\det \tilde g}  d\tilde r \\
&\qquad + \int^{r_*}_0 \left( 2 \Phi + \frac{2(n-1) H}{n-2} \right) \frac{1}{\tilde r}  (\psi-H)^q_+ \sqrt{\det \tilde g}  d\tilde r \\
&\le \int^{r_*}_0 \left( \frac{2 H^2}{n-2} + h^{-2} | \nabla_{\tilde g} h |^2_{\tilde g} +  \rho_0 h^{-1} \right) (\psi-H)^q_+ \sqrt{\det \tilde g}  d\tilde r.
\eal
\ee
Here $r_*$ is the length of the geodesic from $p_0$ to the cut-locus, which also depends on the angular part of the geodesic regarded as the image of a line segment under the exponential map. 

We first estimate the worst term $L_2$.
Using integration by parts and the standard formula 
$
\partial_{\tilde r}\ln \sqrt{\det \tilde g}=\Delta_{\tilde g}\tilde r
$
outside the cut-locus,
together with the identity
$
\Delta_{\tilde g} \tilde r = \psi + \frac{n-1}{\tilde r},
$
we conclude that
\begin{equation*} \label{L_2}
	\begin{aligned}
		L_2 &\ge -\frac{1}{q+1}\int_0^{r_*} (\psi-H)_+^{q+1}\left(\psi+\frac{n-1}{\tilde r}\right)\sqrt{\det\tilde g}\,d\tilde r\\
		&=-\frac{1}{q+1}\int_0^{r_*} (\psi-H)_+^{q+1}\psi\sqrt{\det\tilde g}\,d\tilde r
		-\frac{n-1}{q+1}\int_0^{r_*}(\psi-H)_+^{q+1}\frac{1}{\tilde r}\sqrt{\det\tilde g}\,d\tilde r.
	\end{aligned}
\end{equation*}
Using $L_3$ to cancel the term $\frac{1}{\tilde r}$ in $L_2$, this implies that
\begin{equation} \label{L_3-}
\al
&L_3 - \frac{n-1}{q+1} \int^{r_*}_0   (\psi-H)^{q+1} _+  \frac{1}{\tilde  r} \sqrt{det \tilde  g} d\tilde  r \\
&=\int^{r_*}_0 \frac{2 \psi}{\tilde  r} (\psi-H)^q_+ \sqrt{det \tilde  g} d\tilde  r -\frac{n-1}{q+1} \int^{r_*}_0   (\psi-H)^{q+1} _+  \frac{1}{\tilde  r} \sqrt{det \tilde  g} d\tilde  r \\
&=\int^{r_*}_0 \frac{2 (\psi-H)}{\tilde  r} (\psi-H)^q_+ \sqrt{det \tilde  g} d\tilde  r -\frac{n-1}{q+1} \int^{r_*}_0   (\psi-H)^{q+1} _+  \frac{1}{\tilde  r} \sqrt{det \tilde  g}
d\tilde  r \\
&\qquad +\int^{r_*}_0 \frac{2 H}{\tilde  r} (\psi-H)^q_+ \sqrt{det \tilde  g} d\tilde  r \\
&=\left(2-\frac{n-1}{q+1} \right) \int^{r_*}_0  (\psi-H)^{q+1} _+  \frac{1}{\tilde  r} \sqrt{det \tilde  g}
d\tilde  r
+\int^{r_*}_0 \frac{2 H}{\tilde  r} (\psi-H)^q_+ \sqrt{det \tilde  g} d\tilde  r \\
&
\ge \int^{r_*}_0 \frac{2 H}{\tilde  r} (\psi-H)^q_+ \sqrt{det \tilde  g} d\tilde  r, 
\eal
\end{equation}
The crucial point is that $q>n-2$. 

Combining \eqref{psint1} with \eqref{L_3-}, and recalling that in geodesic polar coordinates $(r,\theta)$ with $\theta \in \mathbb{S}^{n-1}$, the Jacobian factor is given by $\sqrt{\det \tilde g}=\sqrt{\det \tilde g(r,\theta)}$, we expand $\psi=(\psi-H)+H$ (see \cite[p. 22]{TZZZZ}) and obtain
\be
\lab{psint3.1}
\al
& \left(\frac{1}{n-1} - \frac{1}{q+1} \right) \int_M  (\psi-H)^{q+2}_+  d\tilde g \\
&\le \int_M \underbrace{\left(\frac{1}{q+1}- \frac{2 \Phi +2 H }{n-1}  - \frac{2 H}{n-2} \right)}_{f_1}   (\psi-H)^{q+1}_+  d\tilde g  \\
&- \int_M \underbrace{\left( 2 \Phi + \frac{2(n-1) H}{n-2} + 2 H \right)}_{f_2} \frac{1}{\tilde r}  (\psi-H)^q_+  d\tilde g  \\
&\qquad + \int_M \underbrace{\left[ \left( \frac{2 H^2}{n-2} + h^{-2} | \nabla_{\tilde g} h |^2_{\tilde g} +  \rho_0 h^{-1} \right) - \left(\frac{2 \Phi }{n-1}  + \frac{2 H}{n-2} \right) H \right]}_{f_3} (\psi-H)^q_+  d\tilde g\\
&\equiv R_1+ R_2 + R_3,
\eal
\ee 

The argument is similar to that in \cite[pp. 23-24]{TZZZZ}.
From Step 2, we have $\|\Phi\|_{L^p}\le \tilde b_p$ and $\|H\|_{L^p}\le \tilde b_p$. Hence the $L^p$ norms of $f_1,f_2,f_3$ are bounded.
By H\"older's inequality, the three error terms are bounded as follows
\[
\al
&|R_1| \le \Vert (\psi-H)_+ \Vert^{q+1}_{L^{q+2}} \, \Vert f_1 \Vert_{L^{q+2}} \le
\tilde b_p \Vert (\psi-H)_+ \Vert^{q+1}_{L^{q+2}}.\\
&|R_3| \le \Vert (\psi-H)_+ \Vert^{q}_{L^{q+2}} \, \Vert f_3 \Vert_{L^{\frac{q+2}{2}}} \le
\tilde b_p \Vert (\psi-H)_+ \Vert^{q}_{L^{q+2}}.
\eal
\]
\[
\al
|R_2| &\le \left( \int_M (\psi-H)^{q+2}_+ d \tilde g \right)^{\frac{q}{q+2}} \,
\left( \int_M \frac{|f_2|^{\frac{q+2}{2}}}{\tilde r^{\frac{q+2}{2}}} d \tilde g \right)^{\frac{2}{q+2}}\\
&\le \Vert (\psi-H)_+ \Vert^{q}_{L^{q+2}} \, \Vert 1/\tilde r \Vert_{L^{\frac{(q+2)(1+\e)}{2}} }\, \Vert f_2 \Vert_{L^{\frac{(q+2)(1+\e)}{2\e}}}.
\eal
\]
The condition $q+2<\frac{4n\beta}{2n-(n-2)\beta}$ guarantees that one can choose $\epsilon>0$ such that
$\frac{(q+2)(1+\e)}{2}<n$ and $\frac{(q+2)(1+\e)}{2 \e}<\frac{2\beta}{2-\beta}.$
Consequently, the \(L^p\) norm of $\frac{1}{\tilde r}$ is bounded, i.e. $|R_2| \le \tilde b_p \Vert (\psi-H)_+ \Vert^{q}_{L^{q+2}}.$
Then the sum  $R_1+R_2+R_3$ gives
\[
\int_{\{ \psi \ge 0 \}}   (\psi-H)^{q+2}_+  d\tilde  g \le \int_M  (\psi-H)^{q+2}_+  d\tilde  g  \le \tilde  b_q.
\]
It remains to recover the estimate for $\psi_+$ from that of $(\psi-H)_+$. 
Since $\psi=(\psi-H)+H$, we distinguish two cases.

\textit{Case 1:} $\psi(x)\ge0$ and $\psi(x)\ge H(x)$. Then
\[
\psi^{q+2}(x) = (\psi(x)-H(x) + H(x))^{q+2} \le 2^{q+2} \left[ (\psi(x)-H(x))^{q+2}_+
+|H(x)|^{q+2} \right].
\]

\textit{Case 2:} $\psi(x)\ge0$ but $\psi(x)<H(x)$. Then
\[
\psi^{q+2}(x) \le
H^{q+2}(x) =|H(x)|^{q+2}.
\]
Since $H\in L^{q+2}$ (by the gradient estimates for $h$), we conclude that
\[
\int_M \psi_+^{q+2}\,d\tilde g \le \tilde  b_q.
\]
Renaming $q+2$ as $q$, we obtain
\[
\|\psi_+\|_{L^q(M,\tilde g)}\le H_q^*, \qquad q\in(n,\,\frac{4n\beta}{2n-(n-2)\beta}).
\]
This completes the full proof of Theorem \ref{th:1.1}.
\qed
\medskip

Next, we turn to 

\textbf{\emph{Proof of Corollary \ref{volcomp}.}} 
(a). Let $\tau_+ := (\Delta r - \frac{n-1}{r})_+$. The following well-known inequality holds on $(M, g)$ (see e.g. \cite{PW})
\be
\label{differential of volume ratio}
\frac{d}{dr}\left(\frac{\operatorname{Vol}_{ g}(B(x,r))}{r^n}\right)\leq \frac{1}{r^{n+1}}\int_{\mb{S}^{n-1}}\int_0^r s\,\tau_+ \, \sqrt{det g(s,\theta)} dsd\theta.
\ee 
Recall that
$
\psi_+ =\left(  \Delta_{\tilde g} \tilde r - \frac{n-1}{\tilde r}\right) _+.
$
Combining \eqref{differential of volume ratio} for the metric $\tilde g$ with the bound for $\psi_+$ in Theorem \ref{th:1.1},  we obtain
\[
\al
\frac{d}{dr}\left(\frac{\operatorname{Vol}_{\tilde g}(B_{\tilde g}(x,r))}{r^n}\right) 
&\leq \frac{1}{r^n} \int_{B_{\tilde g}(x, r)} \psi_+ \, d \tilde g
\le \frac{1}{r^n} \Vert \psi_+ \Vert_{L^q(M, \tilde g)}
\, \operatorname{Vol}_{\tilde g}(B_{\tilde g}(x, r))^{\frac{q-1}{q}} \\
&\le \frac{H_q^*}{r^n}
\, (Cr^n)^{\frac{q-1}{q}} \le \frac{\tilde b_q}{r^{\frac{n}{q}}}.
\eal
\]
for $q\in(n,\,\frac{4n\beta}{2n-(n-2)\beta})$. Here we have renamed the constant $\tilde b_q$. Likewise,
\[
\frac{\operatorname{Vol}_{\tilde g} (\partial B_{\tilde g}(x, r_2))}{r^{n-1}_2} -
\frac{\operatorname{Vol}_{\tilde g}(\partial B_{\tilde g}(x, r_1))}{r^{n-1}_1}
\le \int_{B_{\tilde g}(x, r_2)\setminus B_{\tilde g}(x, r_1)}  \frac{\psi_+}{r^{n-1}} d\tilde g
\le \tilde b_q \left(r^{\frac{q-n}{q-1}}_2 - r^{\frac{q-n}{q-1}}_1 \right)^\frac{q-1}{q}.
\]

(b). Following \cite[proof of Corollary 1.2]{TZZZZ},
we note that the H\"older continuity of $h$ and Property 2 remain valid in the present setting. 
Therefore, the same proof applies and yields
\be \label{AA}
\frac{\operatorname{Vol}(B(x,r_2))}{r_2^n} - \frac{\operatorname{Vol}(B(x,r_1))}{r_1^n}
\leq b_q r_2^{1-\frac{n}{q}}.
\ee
Since $(M, g)$ satisfies the volume doubling condition by Property 1 and 2, or \eqref{AA},
we can use Lemma 3.3 in \cite{Colding},
which infers
\begin{align*}
\frac{\operatorname{Vol}(B(x,r_2))}{r_2^n} - \frac{\operatorname{Vol}(B(x,r_1))}{r_1^n}
&\leq C \left( 1-\frac{r_1}{r_2} \right)^\delta \frac{\operatorname{Vol}(B(x,r_2))}{r_2^n}  \\
&\leq C \left( 1-\frac{r_1}{r_2} \right)^\delta.
\end{align*}
Here $C$ and $\delta$ depend only on the doubling constant, which depend only on $q$ and the basic parameters $C_0$, $\beta$, $\operatorname{diam} M$, $C_S$ and $n$ in Theorem \ref{th:1.1}.
This completes the proof of Corollary \ref{volcomp}.
\qed

\section*{Acknowledgment}

I am grateful to Professors Jiancheng Liu, Qi S. Zhang and Yu Zheng for their guidance  and encouragement, 
and to PhD students Juanling Lu and Xuenan Fu for their helpful discussions.
I also thank Christian Rose for pointing out a reference in the introduction.
This research was supported by NSF of China (No. 12161078) and the Funds for Innovative Fundamental Research Group Project of Gansu Province (No. 24JRRA778).

During the exploratory stage of this work, 
ChatGPT drew my attention to the article by Adams \cite{Adams}
and also assisted with editing and polishing the presentation and wording of the manuscript.
All mathematical arguments were independently developed by the author.



\end{document}